\documentclass{amsart}
\usepackage{amsmath, amsthm, amssymb}
\usepackage{mathrsfs}
\usepackage{algorithm}
\usepackage{algorithmic}
\usepackage[all]{xy}

\sloppy

\floatname{algorithm}{Procedure}
\newcommand{\algref}[1]{Procedure~\ref{#1}}

\theoremstyle{plain}
\newtheorem{thm}{Theorem}[section]
\newtheorem{prop}[thm]{Proposition}
\newtheorem{lemma}[thm]{Lemma}

\theoremstyle{definition}

\newtheorem{ex}[thm]{Example}
\theoremstyle{remark}
\newtheorem{rem}[thm]{Remark}

\newcommand{\propref}[1]{Proposition~\ref{#1}}
\newcommand{\thmref}[1]{Theorem~\ref{#1}}
\newcommand{\lemmaref}[1]{Lemma~\ref{#1}}

\newcommand{\secref}[1]{Section~\ref{#1}}

\newcommand{\remref}[1]{Remark~\ref{#1}}
\newcommand{\tabref}[1]{Table~\ref{#1}}

\newcommand{\ZZ}{\mathbb{Z}}
\newcommand{\RR}{\mathbb{R}}
\newcommand{\CC}{\mathbb{C}}
\newcommand{\PP}{\mathbb{P}}
\newcommand{\im}[1]{\operatorname{im} (#1)}

\newcommand{\GS}[2]{\operatorname{\Gamma} (#1,#2)}
\renewcommand{\dim}[1]{\operatorname{dim} (#1)}
\renewcommand{\deg}[1]{\operatorname{deg} (#1)}

\begin{document}
\title[A method to compute Segre classes]{A method to compute Segre classes of subschemes of projective space}
\author[D. Eklund]{David Eklund}
    \address{Institut Mittag-Leffler, Aurav\"agen 17, 
SE-182 60 Djursholm Stockholm, Sweden}
    \email{daek@math.kth.se}
     \urladdr{http://www.math.kth.se/$\sim$daek}
\author[C. Jost]{Christine Jost}
    \address{Department of
             Mathematics, Stockholm University, SE-106 91 Stockholm, Sweden}
    \email{jost@math.su.se}
    \urladdr{http://www.math.su.se/$\sim$jost/}
\author[C. Peterson]{Chris Peterson}
    \address{Department of
             Mathematics, Colorado State University, Fort Collins, CO 80523}
    \email{peterson@math.colostate.edu}
    \urladdr{http://www.math.colostate.edu/$\sim$peterson}
\thanks{}
\keywords{Segre classes, computational algebraic geometry, numerical homotopy methods}
\subjclass[2000]{13Pxx, 14Qxx, 14C17, 65H10, 65E05}

\begin{abstract}
We present a method to compute the degrees of the Segre classes of a
subscheme of complex projective space. The method is based on generic
residuation and intersection theory. We provide a symbolic implementation using the software system \emph{Macaulay2} and a numerical implementation using the software package \emph{Bertini}.
\end{abstract}

\maketitle

\section{Introduction}

Segre classes are generalizations of characteristic classes of vector
bundles and they occur frequently in intersection theory. Many
problems in enumerative geometry may be solved by computing the Segre
classes of an algebraic scheme. Given an $n$-dimensional subscheme $Z$
of complex projective space there are $n+1$ Segre classes of $Z$. The
$i^{th}$ Segre class is a rational equivalence class of codimension
$i$ cycles on $Z$. Thus a Segre class may be represented as a weighted
sum of irreducible subvarieties $V_1,\dots,V_m$ of $Z$. The degree of
a Segre class is the corresponding weighted sum of the degrees of the
projective varieties $V_1,\dots,V_m$. 

In this paper we present a method to compute the degrees of the Segre
classes of $Z$, given an ideal defining $Z$. The procedure is based on
the intersection theory of Fulton and MacPherson. More specifically,
we prove a B\'ezout like theorem that involves the Segre classes of
$Z$ and a residual scheme to $Z$. The degree of the residual may be
computed providing intersection-theoretic information on the Segre
classes. This enables us to compute the degrees of these classes.

If $Z$ is smooth, the degrees of the Segre classes of $Z$ carry the
same information as the degrees of the Chern classes of the tangent
bundle of $Z$. For instance, when $Z$ is smooth, the degree of the top
Chern class, which is equal to the topological Euler characteristic of
$Z$, can be computed from the degrees of the Segre classes of
$Z$. Hence one can compute the topological Euler characteristic using
our procedure (provided that the input ideal defines a smooth
scheme). The relationship between the degrees of the Segre classes of
$Z$ and the Chern classes of the tangent bundle of $Z$ may be extended
to the non-smooth case via the so-called Chern-Fulton classes of
$Z$. In case $Z$ is smooth, the Chern-Fulton classes coincide with the
Chern classes of the tangent bundle.

We would like to mention two interesting features of our method. One
feature is that the algorithm is completely elementary in that it
requires no knowledge of intersection theory to understand the steps
in the procedure and only very basic background in algebraic
geometry. The method therefore provides a way of understanding the
computation of Segre classes from an elementary point of view. In
addition, the algorithm is easy to implement. Another feature is the
fact that our method is implementable in a numerical setting via
numerical homotopy methods, see \cite{SW} for an overview of this
area. This allows the method to be applied in settings that can be
time consuming (and even out of reach) of current symbolic
methods. One such setting is when the generating set, for an ideal
determining $Z$, has complicated coefficients. An additional setting
where current numerical methods can sometimes obtain useful
information about the degrees of Segre classes, beyond the reach of
current symbolic methods, is when $Z$ is a reduced scheme of high
codimension. These gains come through numerical approximation and
parallelization (but at the expense of exactness).

The procedure presented in this paper has been implemented in the
symbolic setting using the software system \emph{Macaulay2} \cite{GS} and in the numerical setting using the software package \emph{Bertini} \cite{BHSW}.
Both implementations are available at http://www.math.su.se/$\sim$jost/segreimplementation.htm. Initial experiments, involving subvarieties of relatively large dimension and codimension, show a great deal of promise for the algorithm in the numerical setting.

In the paper \cite{A} Aluffi formulates an algorithm that also
computes the degrees of the Segre classes of a subscheme $Z$ of
projective space. In addition he shows how to relate the computation
of the so-called Chern-Schwartz-MacPherson classes of a subscheme of
projective space to the computation of the degrees of certain Segre
classes. In the present paper we present an alternative method to
Aluffi's. Though the two are closely related, they have a rather
different computational behavior and seem to complement each other
well (see \secref{sec:benchmarks} for some examples). Apart from the
difference in computing speed in various cases one may ask what is the
need for another method with the same output as an existing
method. One answer is that our approach is different and therefore
sheds new light on the problem of computing Segre classes. But more
importantly we would answer by repeating the two features mentioned
above, namely that our method is elementary and that it is readily
amenable to numerical computation.

The paper is organized as follows. In \secref{sec:background} we give
the basic definitions and state a theorem from intersection theory. In
\secref{sec:computing} we derive a recursive formula for Segre classes
which is the basis of our method. The procedure to compute Segre
classes is presented in \secref{sec:method}. Some examples are given
in \secref{sec:examples} and in \secref{sec:benchmarks} we give a list
of run times on examples comparing our method to other algorithms.

The results of this paper are generalizations and variants of the
results in \cite{BEP,DEPS} to the setting of subschemes of projective space.

\section*{Acknowledgments}
We would like to thank Paolo Aluffi and Sandra Di Rocco for their useful comments and
encouragement. We thank Jon Hauenstein for pointing out how we
could utilize efficient numerical methods and also for his help with
running examples in \emph{Bertini}. Finally, we thank the Institut Mittag-Leffler for
their wonderful research environment that facilitated the completion of this paper.

\section{Background in intersection theory} \label{sec:background}
We start by going through some concepts and results from intersection
theory. For this paper, the main reference on matters of intersection
theory is Fulton's book \cite{F}. 

\subsection{Notation}
Let $Y$ be an algebraic scheme over $\CC$ of dimension $n$. By a
subscheme of $Y$ we will mean a closed subscheme. We will denote by
$C_p(Y)$ the free Abelian group on irreducible $p$-dimensional
subvarieties of $Y$. The $p^{th}$ Chow group of $Y$ is the quotient of
$C_p(Y)$ by the cycles rationally equivalent to 0, and it is denoted
$A_p(Y)$. The Chow group of $Y$ is the group $A_*(Y)=\bigoplus_{p=0}^n
A_p(Y)$. Given an element $\alpha \in A_*(Y)$, $\{\alpha\}_p \in
A_p(Y)$ will denote the $p^{th}$ homogeneous component of $\alpha$ (if
$n<p$, then $\{\alpha\}_p=0$). A subscheme $X \subseteq Y$ induces a
cycle class $[X] \in A_*(Y)$ represented by $\sum_{i=1}^t m_i X_i$,
where $X_1,\dots,X_t$ are the irreducible components of $X$ and
$m_1,\dots,m_t$ their geometric multiplicities in $X$. In particular,
$[\emptyset]=0$. If $\alpha \in A_*(X)$, we will at times consider
$\alpha$ to be an element of $A_*(Y)$, omitting in the notation
the push-forward under the inclusion map.

For a rank $\rho$ vector bundle $E$ on $Y$ we have the Chern class
operations $c_i(E):A_p(Y) \rightarrow A_{p-i}(Y)$ for $i \leq p$, $0
\leq i \leq \rho$ and $0 \leq p \leq n$, see \cite{F} Chapter~3. The
value of $c_i(E)$ on $\alpha \in A_p(Y)$ is denoted $c_i(E) \cap
\alpha$. The corresponding map $A_*(Y) \rightarrow A_*(Y)$ is also
denoted $c_i(E)$, where $c_i(E)\cap \alpha=0$ if $\alpha \in A_p(Y)$
and $p < i$. The total Chern class operation $c(E):A_*(Y) \rightarrow
A_*(Y):\alpha \mapsto c(E)\cap \alpha$ is defined by $c(E)=\sum_i
c_i(E)$. The operation of a product of Chern classes on the Chow group is defined as the composition of
the individual Chern class operations. The map $c_0(E)$ is the identity
homomorphism. If $Y$ is smooth, the Chern classes are well defined
elements of $A_*(Y)$ and the operations $c_i(E) \cap \alpha$
correspond to the intersection product. Let $X \subseteq Y$ be a
closed subscheme with inclusion $i:X \rightarrow Y$ and let $\alpha
\in A_*(X)$. By the notational convention mentioned above we will
sometimes write $c(E)\cap \alpha$ to mean $c(E) \cap i_*(\alpha) \in
A_*(Y)$. By the projection formula, see \cite{F} Theorem~3.2~(c),
$c(E) \cap i_*(\alpha)=i_*(c(i^*E)\cap \alpha)$.

For a Cartier divisor $D$ on $Y$, the corresponding line bundle on $Y$
is denoted by $\mathcal{O}_Y(D)$.

In this paper, varieties are by definition irreducible and
reduced. Finally, we use the convention $\dim{\emptyset}=-1$.

\subsection{Regular embeddings}
Let $Y$ be an algebraic scheme over $\CC$. A closed embedding $X
\rightarrow Y$ of a subscheme $X$ of $Y$ is called a regular embedding
of codimension $d$ if the following holds. Every point of $X$ has an
affine open neighborhood $U$ in $Y$ such that the ideal defining $X
\cap U$ is generated by a regular sequence of length $d$ in the
coordinate ring of $U$. If, for some $d$, $X$ is a regular embedding in $Y$ of codimension $d$, then we will simply
say that the embedding is regular.

\begin{lemma} \label{lemma:regular}
Let $Y$ be a complex variety and let $\mathcal{L}$ be a line bundle on
$Y$ such that the corresponding complete linear system is base point
free. For global sections $\sigma_1,\dots,\sigma_{\mu} \in \GS{Y}{\mathcal{L}}$
of $\mathcal{L}$, let $X=X(\sigma_1,\dots,\sigma_{\mu})$ denote the scheme of
common zeros of $\sigma_1,\dots,\sigma_{\mu}$. Then, for general
$\sigma_1,\dots,\sigma_{\mu} \in \GS{Y}{\mathcal{L}}$, the natural embedding $X
\rightarrow Y$ is regular.
\end{lemma}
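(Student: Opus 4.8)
The plan is to trivialize $\mathcal{L}$ locally, so that the sections $\sigma_i$ become regular functions, and then to prove by induction on $\mu$ that for general sections these functions form a regular sequence in every local ring of $Y$. The only use made of base point freeness is that a general section of $\mathcal{L}$ vanishes on none of the (finitely many) associated subvarieties of a given subscheme of $Y$.

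The key point is the following. For a nonempty subvariety $V \subseteq Y$, the linear subspace $\{\sigma \in \GS{Y}{\mathcal{L}} : \sigma|_V = 0\}$, which is the kernel of the restriction map $\GS{Y}{\mathcal{L}} \to \GS{V}{\mathcal{L}|_V}$, is proper: base point freeness supplies, for any chosen point $q \in V$, a section not vanishing at $q$, hence not vanishing on $V$. Consequently, if $W \subseteq Y$ is a subscheme with associated subvarieties $V_1, \dots, V_s$, then a general $\sigma \in \GS{Y}{\mathcal{L}}$ satisfies $\sigma|_{V_j} \neq 0$ for all $j$, since the exceptional set lies in the proper closed subset $\bigcup_j \{\sigma : \sigma|_{V_j} = 0\}$. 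Choosing an affine open $U$ with $\mathcal{L}|_U \cong \mathcal{O}_U$, a section $\sigma$ is represented by a function $f \in \mathcal{O}(U)$; since the associated primes of $\mathcal{O}(U)/I_W$ are exactly those corresponding to the $V_j$ meeting $U$, the condition that $\sigma|_{V_j}\neq 0$ for all $j$ is equivalent to $f$ being a nonzerodivisor on $\mathcal{O}(U)/I_W$.

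With this in hand, I would induct on $i$ to show: for general $\sigma_1, \dots, \sigma_i$, on each affine open $U$ trivializing $\mathcal{L}$ the representing functions $f_1, \dots, f_i$ form a regular sequence in $\mathcal{O}(U)$. Granting this for $i = \mu$, the lemma follows: at any point of $X = X(\sigma_1,\dots,\sigma_\mu)$ the $f_j$ are all non-units, so $(f_1,\dots,f_\mu)$ is a proper ideal of height exactly $\mu$, and the trivializing affines meeting $X$ provide the neighbourhoods required in the definition of a regular embedding of codimension $\mu$; if $X = \emptyset$ there is nothing to prove, by the convention that the empty embedding is regular. The case $i = 0$ is trivial, and for the inductive step, with $\sigma_1, \dots, \sigma_i$ already chosen generally, the Noetherian scheme $X_i = X(\sigma_1,\dots,\sigma_i)$ has finitely many associated subvarieties, so by the previous paragraph a general $\sigma_{i+1}$ is represented on each $U$ by a function that is a nonzerodivisor on $\mathcal{O}(U)/(f_1,\dots,f_i)$; hence $f_1,\dots,f_{i+1}$ is a regular sequence there.

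The step requiring the most care---and the main obstacle---is converting this nested construction, in which $\sigma_1$ is chosen general, then $\sigma_2$ general depending on $\sigma_1$, and so on, into a single general choice of the tuple $(\sigma_1,\dots,\sigma_\mu) \in \GS{Y}{\mathcal{L}}^\mu$. One uses that each admissible locus is nonempty and open in $\GS{Y}{\mathcal{L}}$ and that the locus of good tuples in $\GS{Y}{\mathcal{L}}^\mu$ is constructible---the latter extracted from the universal common-zero scheme over the parameter space together with the standard semicontinuity of fibre dimension and generic flatness---after which one invokes the elementary fact that a constructible set all of whose fibres over a dense open set are dense must itself contain a dense open set. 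A routine auxiliary point is that a function which is a nonzerodivisor in $\mathcal{O}_{Y,p}$ for every $p \in U$ is already a nonzerodivisor in $\mathcal{O}(U)$, which follows from the compatibility of associated primes with localization and also allows one to shrink $U$ freely.
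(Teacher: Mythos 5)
Your proof is correct, but it takes a genuinely different route from the paper's. The paper inducts on the dimension of the ambient variety: it uses the morphism $\phi\colon Y\to\PP^r$ given by the complete linear system together with Bertini-type theorems (reducedness and irreducibility of a general member, quoted from Flenner--O'Carroll--Vogel) to arrange that $X(\sigma_1)$ is again a variety, so that on any affine chart a nonzero function is automatically a nonzerodivisor, and then concludes by composing regular embeddings. You instead keep $Y$ fixed and induct on the number of sections via the classical prime-avoidance criterion: base point freeness makes each locus $\{\sigma:\sigma|_{V_j}=0\}$ a proper linear subspace, so a general section misses every associated subvariety of $X(\sigma_1,\dots,\sigma_i)$, which is exactly the nonzerodivisor condition on $\mathcal{O}(U)/(f_1,\dots,f_i)$. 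Your argument is more elementary (no Bertini) and works uniformly whether or not the intermediate zero schemes are irreducible or reduced; the small costs --- checking properness of the ideal at points of $X$, and passing between pointwise and global nonzerodivisors --- are ones you address. You also confront the nested-versus-simultaneous generality issue, which the paper's proof shares and leaves implicit (it likewise chooses $\sigma_2$ generally only after fixing $\sigma_1$); your constructibility sketch is the right idea, though making it precise (e.g.\ exhibiting the bad locus as the Chevalley image of the support of higher Koszul homology over the parameter space) would take somewhat more work than you indicate, and is arguably beyond the level of rigor the paper itself applies to the word ``general.''
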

\begin{proof}
Let $\phi:Y \rightarrow \PP^r$ be the map given by the complete linear
system corresponding to $\mathcal{L}$ and put $e=\dim{\im{\phi}}$. If
$\mu > e$ then $X(\sigma_1,\dots,\sigma_{\mu})=\emptyset$ for general sections
$\sigma_1,\dots,\sigma_{{\mu}}$ and hence we may assume that $\mu \leq e$. That
$X(\sigma_1) \rightarrow Y$ is a regular embedding for a general section
$\sigma_1$ is clear since $\mathcal{O}_Y(U)$ is an integral domain for any
open set $U \subseteq Y$. Hence, for any affine open $U \subseteq Y$,
$\mathcal{O}_Y(U)$ has no zero divisors and any $\sigma_1 \neq 0$ will give
a regular sequence in $\mathcal{O}_Y(U)$. If $\mu=1$ we are done. If
$\dim{Y} \leq 1$ then $e \leq 1$ and therefore $\mu=1$ in this
case. Assume that $1<\mu$, in particular $1<\dim{Y}$. Then $e \geq 2$
and it follows from Bertini type theorems that $X(\sigma_1)$ is a variety
for a general section $\sigma_1$. In fact, $X(\sigma_1)$ is reduced by
\cite{FOV} Corollary~3.4.9 and $X(\sigma_1)$ is irreducible by \cite{FOV}
3.4.10. Replacing $Y$ by $X(\sigma_1)$ and restricting $\mathcal{L}$ to
$X(\sigma_1)$ we have reduced to the case of a lower dimensional ambient
variety since the composition of two regular embeddings is regular
(see \cite{F} Appendix~B.7.4).
\end{proof}

\subsection{Segre classes and intersection products}
Let $Y$ be a complex variety and let $X$ be an $n$-dimensional
subscheme of $Y$. Suppose that $X \neq Y$ and let $\widetilde{Y}$ be
the blow-up of $Y$ along $X$. Let $\pi: \widetilde{Y} \rightarrow Y$
be the projection, let $\widetilde{X}=\pi^{-1}(X)$ be the exceptional
divisor, and let $\eta=\pi |_{\widetilde{X}}$. The total Segre class
$s(X,Y)$ of $X$ in $Y$ is an element of $A_*(X)$ which may be
characterized as follows (see \cite{F}
Corollary~4.2.2): $$s(X,Y)=\sum_{p\geq
  1}(-1)^{p-1}\eta_*(\widetilde{X}^p).$$ Here $\widetilde{X}^p$ is the
self intersection of Cartier divisors defined in \cite{F}
Definition~2.4.2.

\begin{rem} The Cartier divisor $\widetilde{X}$ on
$\widetilde{Y}$ corresponds to a line bundle on $\widetilde{Y}$ whose
  restriction to $\widetilde{X}$ is the normal bundle of
  $\widetilde{X}$ in $\widetilde{Y}$. The dual bundle to the normal
  bundle is denoted $\mathcal{O}(1)$. The exceptional divisor
  $\widetilde{X}$ is naturally identified with the so-called
  projective normal cone $P(C_{X}Y)$ of $X$ in $Y$. The total Segre
  class is given by $$s(X,Y)=\sum_{i\geq 0}
  \eta_*(c_1(\mathcal{O}(1))^i \cap [P(C_{X}Y)]).$$ This definition of
  Segre classes generalizes beyond normal cones of subschemes to
  arbitrary cones, see \cite{F} Chapter~4.
\end{rem}

We will now recall the Fulton-MacPherson approach to intersection
products, see \cite{F} Chapter~6. Let $Y$ and $V$ be complex varieties
and put $k=\dim{V}$. Let $X$ be a closed subscheme of $Y$ such that
there is a regular embedding $i:X \rightarrow Y$ of codimension $d
\leq k$. Let $f:V \rightarrow Y$ be a morphism and put
$W=f^{-1}(X)$. Then we get the following fibre product diagram
\begin{center}
$\xymatrix{W \ar[r]^j \ar[d]_g & V \ar[d]^{f} \\ X \ar[r]_i & Y}$
\end{center}
where $j:W \rightarrow V$ is the inclusion and $g:W \rightarrow X$ the
restriction of $f$ to $W$. The normal cone $C_WV$ of $W$ in $V$ can be
constructed as follows. Suppose first that $V$ is affine with
coordinate ring $A$ and that $W$ is defined by an ideal $J \subseteq
A$ generated by $f_1,\dots,f_d \in A$. Let $B=A/J$. Then $C_W V$ is
the spectrum of the $B$-algebra $\bigoplus_{s \geq 0}
J^s/J^{s+1}$. Thus $C_WV$ may be embedded as a closed subscheme of $W
\times \CC^d$ defined by the kernel of the surjective
homomorphism $$B[x_1,\dots,x_d] \rightarrow \bigoplus_{s \geq 0}
J^s/J^{s+1}$$ which maps $x_i$ to the image of $f_i$ in $J/J^2$. In
the general case, $C_W V$ may be constructed by covering $V$ with open
affine subsets and gluing the normal cones of the affine patches
together. In case the embedding $W \rightarrow V$ is regular, $C_WV$
is a vector bundle, namely the normal bundle $N_WV$. In this case the
total Segre class of $W$ in $V$ is the inverse of the total Chern
class of $N_WV$ in the sense that $s(W,V)=c(N_WV)^{-1}\cap [W]$, where
$c(N_WV)^{-1}$ is a formal inverse of $c(N_WV)$. The normal cone
$C_WV$ has pure dimension $k$, see \cite{F} Appendix~B.6.6. Let
$C=C_WV$, let $N=g^*(N_XY)$ (where $N_XY$ is the normal bundle of $X$
in $Y$), and let $p:N \rightarrow W$ be the projection. It is shown
in \cite{F} Chapter~6, that $C$ embeds in $N$ and therefore it
determines a class $[C] \in A_k(N)$. Now, by \cite{F} Theorem 3.3 (a),
the flat-pullback $p^*: A_{k-d}(W) \rightarrow A_k(N)$ is an
isomorphism. This map is given by $p^*(Z)=[p^{-1}(Z)]$ for an
irreducible subvariety $Z \subseteq W$. The intersection product of
$V$ by $X$ on $Y$ is a class in $A_{k-d}(W)$ denoted $X\cdot V$ and
defined by $$X\cdot V=(p^*)^{-1}([C]).$$ An important connection to
Segre classes is given by \cite{F} Proposition~6.1~(a): $$X\cdot
V=\{c(N)\cap s(W,V)\}_{k-d}.$$

We will now state the Residual Intersection Formula from \cite{F}
which is the main result underpinning our method to compute Segre
classes. Let $Y$, $X$, $W$, $V$, $N$, $k$ and $d$ be as in the above
definition of the intersection product $X \cdot V$. Let $Z \subseteq
W$ be a closed subscheme and suppose that $Z \neq V$. Let
$\pi:\widetilde{V} \rightarrow V$ be the blow-up of $V$ along $Z$ and
put $\widetilde{W}=\pi^{-1}(W)$ and $\widetilde{Z}=\pi^{-1}(Z)$. Let
$\widetilde{R}$ be the residual scheme to $\widetilde{Z}$ in
$\widetilde{W}$ with respect to $\widetilde{V}$, see
\cite{F}~Definition 9.2.1. This is a scheme such that, if
$\mathscr{I}(\widetilde{Z})$, $\mathscr{I}(\widetilde{W})$ and
$\mathscr{I}(\widetilde{R})$ are the ideal sheaves in
$\mathcal{O}_{\widetilde{V}}$ defining the respective schemes,
then $$\mathscr{I}(\widetilde{W})=\mathscr{I}(\widetilde{Z}) \cdot
\mathscr{I}(\widetilde{R}).$$ Let $\eta: \widetilde{W} \rightarrow W$
be the restriction of $\pi$ to $\widetilde{W}$ and let
$\mathcal{O}(-\widetilde{Z})$ denote the pullback of
$\mathcal{O}_{\widetilde{V}}(-\widetilde{Z})$ under the inclusion
$\widetilde{W} \rightarrow \widetilde{V}$.

The following proposition is Corollary~9.2.3 of \cite{F}.
\begin{prop} \label{prop:Fulton}
With notation as above, $$X \cdot V =\{c(N)\cap s(Z,V)\}_{k-d}+\RR,$$
where $\RR=\eta_*(\{c(\eta^*N\otimes
\mathcal{O}(-\widetilde{Z}))\cap
s(\widetilde{R},\widetilde{V})\}_{k-d})$.
\end{prop}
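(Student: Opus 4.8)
The plan is to obtain the formula from Fulton's general residual-intersection machinery --- it is Corollary~9.2.3 of \cite{F} --- by combining the two facts already recorded in this section with the behaviour of Segre classes under blow-up along a subscheme contained in $W$. First I would use \cite{F} Proposition~6.1~(a), namely $X \cdot V = \{c(N) \cap s(W,V)\}_{k-d}$, to reduce the assertion to the purely Segre-theoretic identity
$$
\{c(N) \cap s(W,V)\}_{k-d} = \{c(N) \cap s(Z,V)\}_{k-d} + \RR \qquad \text{in } A_{k-d}(W).
$$
After this reduction the regular embedding $i:X\to Y$ enters only through the auxiliary bundle $N = g^*(N_XY)$, and the whole burden is to relate $s(W,V)$ to $s(Z,V)$ and to the Segre class of the residual scheme $\widetilde{R}$.

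The second step is to feed in the blow-up $\pi:\widetilde{V}\to V$ along $Z$. Here $\widetilde{Z}=\pi^{-1}(Z)$ is an effective Cartier divisor, and by construction the factorisation $\mathscr{I}(\widetilde{W})=\mathscr{I}(\widetilde{Z})\cdot\mathscr{I}(\widetilde{R})$ exhibits $\widetilde{W}$ on $\widetilde{V}$ as residual to the Cartier divisor $\widetilde{Z}$. The engine is the deformation-to-the-normal-cone argument of \cite{F} Chapter~9: it describes how the normal cone $C_WV$, which controls $s(W,V)$, breaks up on $\widetilde{V}$ into a part supported over $\widetilde{Z}$ --- this part reassembles, after pushing forward, into the $s(Z,V)$ contribution --- and a residual part built from $C_{\widetilde{R}}\widetilde{V}$. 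The twist $\mathcal{O}(-\widetilde{Z})$ is forced by this decomposition: pushing the residual part down from $\widetilde{W}$ to $W$ passes through the exceptional divisor, and tensoring the pullback $\eta^*N$ by $\mathcal{O}(-\widetilde{Z})$ is exactly the correction that makes the Chern-class bookkeeping come out right, so that the residual contribution is $\eta_*\bigl(\{c(\eta^*N\otimes\mathcal{O}(-\widetilde{Z}))\cap s(\widetilde{R},\widetilde{V})\}_{k-d}\bigr)=\RR$. One could also phrase parts of this using the blow-up characterisation of Segre classes from \cite{F} Corollary~4.2.2, but the cone decomposition is what actually isolates the residual term.

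The main obstacle is precisely this last identification: keeping track of the decomposition of $C_WV$ on the blow-up, of the Chern-class twist by $\mathcal{O}(-\widetilde{Z})$, and of the various pushforwards, carefully enough to see that the pieces recombine into the displayed identity in degree $k-d$ with no double counting along $\widetilde{Z}$. Since \cite{F} Chapter~9 performs exactly this computation in full generality, in the final write-up I would present the proposition as a direct consequence of \cite{F} Corollary~9.2.3, the discussion above serving only to explain why the twisted bundle $\eta^*N\otimes\mathcal{O}(-\widetilde{Z})$ and the pushforward $\eta_*$ appear.
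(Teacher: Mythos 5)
Your proposal lands exactly where the paper does: the paper offers no proof of this proposition, simply stating that it \emph{is} Corollary~9.2.3 of \cite{F}, and you likewise conclude by presenting it as a direct consequence of that corollary. Your surrounding sketch of the deformation-to-the-normal-cone mechanism and the role of the twist by $\mathcal{O}(-\widetilde{Z})$ is accurate background but not something the paper attempts to reprove.
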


\section{Computing Segre classes of projective schemes} \label{sec:computing}
Let $Z$ be a proper $n$-dimensional subscheme of complex projective
space $\PP^k$. This paper is about a method for computing the
push-forward of $s(Z,\PP^k)$ to $\PP^k$ given an ideal defining
$Z$. In this section we explain how to derive information about the
push-forward given sufficiently general elements from the ideal.

Let $s_0,\dots,s_n$ be the homogeneous components of $s(Z,\PP^k)$ with
$s_i$ of codimension $i$, that is $s(Z,\PP^k)=\sum_{i=0}^n s_i$ with
$s_i \in A_{n-i}(Z)$. The degree of a 0-cycle $\alpha=\sum_i m_i p_i$
on $\PP^k$, $m_i \in \ZZ$ and $p_i \in \PP^k$, is simply
$\deg{\alpha}=\sum_i m_i$. If $\gamma:Z \rightarrow \PP^k$ is the
inclusion map, we define the degree of $s_i$
by $$\deg{s_i}=\deg{\gamma_*(s_i)\cdot H^{n-i}},$$ where $H \in
A_{k-1}(\PP^k)$ is the hyperplane class and the product is the
intersection product on $\PP^k$. The numbers $\{\deg{s_i}\}_i$ is the output
of our procedure, they carry the same information as the push-forward
$\gamma_*(s(Z,\PP^k))$. The degree of any $\alpha \in A_p(\PP^k)$ is
defined similarly by $\deg{\alpha \cdot H^p}$.

Let $I \subseteq \CC[x_0,\dots,x_k]$ be a homogeneous ideal. For a
positive integer $m$, we use $I(m)$ to denote the $m^{th}$ graded
piece of $I$. Given a homogeneous ideal $J$, the ideal quotient $J:I$
is given by $$J:I=\{f\in \CC[x_0,\dots,x_k]:fI\subseteq J\},$$ and the
saturation of $J$ with respect to $I$ is $$J:I^{\infty}=\bigcup_{p\geq
  1} J:I^p.$$ Note that $(J:I^p):I=J:I^{p+1}$ for $p \geq 1$, that
the ascending sequence of ideal quotients $J:I \subseteq J:I^2
\subseteq J:I^3 \subseteq \dots$ stabilizes and that $J:I^p=J:I^{\infty}$
for large enough $p$.

\begin{rem}
Let $I$ and $J$ be homogeneous ideals of $\CC[x_0,\dots,x_k]$ and let
$V(I)$ and $V(J)$ denote the corresponding zero-loci in $\PP^k$. If
$I=\CC[x_0,\dots,x_k]$, then $J:I^{\infty}=J$. Suppose $I \neq
\CC[x_0,\dots,x_k]$. The ideal $J:I^{\infty}$ is homogeneous and the
scheme $R$ defined by $J:I^{\infty}$ is supported on the
Zariski-closure of $V(J) \setminus V(I)$. In fact, if $J=\bigcap_i
Q_i$ is a primary decomposition (so each $Q_i$ is homogeneous and primary)
then 
\begin{equation} \label{eq:primary}
J:I^{\infty}=\bigcap_{\{i\hskip 1pt : \hskip 1pt V(Q_i)
  \nsubseteq V(I)\}} Q_i.
\end{equation}
To see this, note that $J:I^{\infty}=\bigcap_i (Q_i:I^{\infty})$ and
that $V(Q_i) \subseteq V(I)$ precisely when $\sqrt{Q_i} \supseteq
I$. If $\sqrt{Q_i} \supseteq I$, then $Q_i \supseteq I^p$ for some $p$
and $Q_i:I^{\infty}= \CC[x_0,\dots,x_k]$. On the other hand, if
$\sqrt{Q_i} \nsupseteq I$, then $Q_i:I=Q_i$ since $Q_i$ is primary. It
follows that $Q_i:I^{\infty}=Q_i$ in this case.
\end{rem}

The following theorem is a B\'ezout like equality which gives rise
to a recursive formula for the degrees of the Segre classes of $Z$ in
$\PP^k$. Using the statement of the theorem, we may express the degree
of a Segre class $s_p$ in terms of $\deg{s_i}$ for $i < p$ and the
degree of a certain residual scheme $R$. Computing the degree of the
residual $R$ is the main computational step in the method.

\begin{thm} \label{thm:main}
Let $Z \subset \PP^k$ be a subscheme of dimension $n$ defined by a
non-zero homogeneous ideal $I \subseteq \CC[x_0,\dots,x_k]$. Let
$s_0,\dots,s_n$ denote the Segre classes of $Z$ in $\PP^k$. Let
$g_0,\dots,g_r$ be a set of non-zero homogeneous generators of $I$ and
put $m=\max_i \{\deg{g_i}\}$. For $k-n \leq d \leq k$ and general
elements $f_1,\dots,f_d \in I(m)$, the following holds. If $J$ is the
ideal generated by $\{f_1,\dots,f_d\}$ and $R \subseteq \PP^k$ is the
subscheme defined by $J:I^{\infty}$, then
$$m^d=\deg{R}+\sum_{i=0}^p\binom{d}{p-i}m^{p-i}\deg{s_i},$$
where $p=d-(k-n)$.
\end{thm}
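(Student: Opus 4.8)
The plan is to apply the Residual Intersection Formula (\propref{prop:Fulton}) in the specific geometric situation arising from the data of the theorem. First I would set up the ambient geometry: take $Y = \PP^k$, and let $V \subseteq \PP^k$ be a general complete intersection of $d - (k-n) = p$ of the forms $f_1, \dots, f_d$, so that $V$ is a variety of dimension $k - p = k - d + (k-n)$; by \lemmaref{lemma:regular} (with $\mathcal{L} = \mathcal{O}_{\PP^k}(m)$, which is very ample hence base point free) the remaining forms $f_{p+1}, \dots, f_d$ cut out on $V$ a regular embedding. Actually, the cleanest route is to let $X \subseteq \PP^k$ be the complete intersection defined by all of $f_1, \dots, f_d$ --- a regular embedding of codimension $d$ --- and take $V = \PP^k$, $f = \mathrm{id}$, so $W = f^{-1}(X) = X$. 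The normal bundle is $N_X \PP^k = \mathcal{O}_X(m)^{\oplus d}$, and the intersection product $X \cdot \PP^k \in A_{k-d}(X)$ has degree $m^d$, since each $f_i$ is a section of $\mathcal{O}(m)$ and the self-intersection of $d$ such divisors on $\PP^k$ has degree $m^d$ by Bézout.

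Next I would take $Z' = W \cap X$ where $Z'$ is the subscheme of $X$ cut out by $I$ --- more precisely, since $J = (f_1, \dots, f_d) \subseteq I$, we have $V(I) \subseteq V(J) = X$, and I would let the role of ``$Z$'' in \propref{prop:Fulton} be played by the subscheme of $X$ defined by $I$ (which is $Z$ itself, as a scheme, since $I$ defines $Z$ in $\PP^k$). Blowing up $V = \PP^k$ along $Z$ gives $\widetilde{V}$, and $\widetilde{W} = \widetilde{X} = \pi^{-1}(X)$; the residual scheme $\widetilde{R}$ to $\widetilde{Z}$ in $\widetilde{X}$ has ideal sheaf satisfying $\mathscr{I}(\widetilde{X}) = \mathscr{I}(\widetilde{Z}) \cdot \mathscr{I}(\widetilde{R})$. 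The key point to establish is that under general choice of the $f_i \in I(m)$, the residual scheme $\widetilde{R}$ pushes down to (or is) the subscheme $R$ of $\PP^k$ defined by $J : I^\infty$, so that $\eta_*[\widetilde{R}]$ relates to $[R]$, and that the relevant term $\RR = \eta_*\big(\{c(\eta^*N \otimes \mathcal{O}(-\widetilde{Z})) \cap s(\widetilde{R}, \widetilde{V})\}_{k-d}\big)$ has degree equal to $\deg{R}$. This is where I expect the main obstacle: one must argue that for general $f_i$ the residual $R$ is zero-dimensional (or at least that only its zero-dimensional part contributes, with the right multiplicity) and that the twisting bundle $\mathcal{O}(-\widetilde{Z})$ combined with $\eta^* N = \eta^*\mathcal{O}_X(m)^{\oplus d}$ contributes the factor $m^{\dim}$ trivially on a zero-cycle, leaving just $\deg{[R]}$.

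Then I would expand the first term $\{c(N) \cap s(Z, \PP^k)\}_{k-d}$. Here $N = N_X\PP^k = \mathcal{O}_X(m)^{\oplus d}$ pulled back to $Z$, so $c(N) = (1 + mH)^d$ where $H$ is the hyperplane class restricted to $Z$. Writing $s(Z, \PP^k) = \sum_{i=0}^n s_i$ with $s_i \in A_{n-i}(Z)$, we get
$$
\{c(N) \cap s(Z,\PP^k)\}_{k-d} = \sum_{i} \binom{d}{d - (n-i) - (k-d)} m^{d-(n-i)-(k-d)} (H^{\,d-(n-i)-(k-d)} \cap s_i).
$$
Setting $p = d - (k-n)$, the exponent $d - (n-i) - (k-d)$ simplifies to $p - i$, so taking degrees (recall $\deg{s_i} = \deg{H^{n-i} \cap s_i}$ and one checks the total codimension works out to give a zero-cycle) yields $\sum_{i=0}^{p} \binom{d}{p-i} m^{p-i} \deg{s_i}$. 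Combining with $\deg(X \cdot \PP^k) = m^d$ and $\deg \RR = \deg{R}$ gives the claimed identity $m^d = \deg{R} + \sum_{i=0}^p \binom{d}{p-i} m^{p-i}\deg{s_i}$. The routine parts are the binomial bookkeeping and the degree computation on $\PP^k$; the substantive parts are (a) correctly identifying the intersection-theoretic meaning of $m^d$ via Bézout, and (b) the genericity argument identifying $\deg \RR$ with $\deg{R}$, for which one uses \eqref{eq:primary} to see that $J : I^\infty$ removes exactly the components of $V(J)$ contained in $Z = V(I)$, so that $R$ is the ``residual'' locus in the naive sense, and then matches this with Fulton's residual scheme via the general position of the $f_i$.
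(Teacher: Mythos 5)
Your high-level strategy --- apply \propref{prop:Fulton} so that B\'ezout gives $m^d$, the Segre term gives $\sum_{i=0}^p\binom{d}{p-i}m^{p-i}\deg{s_i}$, and the residual term gives $\deg{R}$ --- is the right one, and your binomial bookkeeping at the end is correct. But your ``cleanest route'' breaks at the very first step: you set $X=V(f_1,\dots,f_d)\subseteq\PP^k$ and call it a regular embedding of codimension $d$. It is not: $X$ contains $Z$, which has dimension $n$, and for $d>k-n$ (exactly the cases needed to reach $s_1,\dots,s_n$) we have $n>k-d$, so $X$ has a component of dimension greater than $k-d$ and cannot be regularly embedded in codimension $d$. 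Moreover, with $V=Y=\PP^k$ and $f=\mathrm{id}$ there is no excess intersection for the residual formula to resolve. The paper circumvents this by taking $Y=(\PP^k)^d$, $X=X_1\times\dots\times X_d$ with $X_\nu=\{f_\nu=0\}$ (each a Cartier divisor, so $X\rightarrow Y$ is regular of codimension $d$), and $f:\PP^k\rightarrow Y$ the diagonal; then $W=f^{-1}(X)=\bigcap_\nu X_\nu$ is the possibly excessive scheme cut out by $J$, the bundle $N$ pulls back to $\mathcal{O}_{\PP^k}(m)^{\oplus d}$, and $\deg{j_*(X\cdot\PP^k)}=m^d$ by B\'ezout.

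The second gap is the identification $\deg{j_*(\RR)}=\deg{R}$, which you correctly flag as the main obstacle but do not prove; your guess that $R$ is zero-dimensional is also wrong in general ($R$ has pure dimension $k-d=n-p$, which is $0$ only when $d=k$). The paper's key observation is that on the blow-up $\widetilde{\PP}^k\subseteq\PP^k\times\PP^r$, the residual scheme $\widetilde{R}$ to $\widetilde{Z}$ in $\widetilde{W}$ equals $\phi^{-1}(L)$ for a general codimension-$d$ linear subspace $L\subseteq\PP^r$, where $\phi$ is the map to $\PP^r$ determined by $g_0,\dots,g_r$; this is verified by an explicit computation on affine charts of $\PP^k\times\PP^r$, using that each $f_i$ is a general linear combination of the $g_j$. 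From this one deduces that $\widetilde{R}$ is empty or pure of dimension $k-d$, that $\widetilde{R}\rightarrow\widetilde{\PP}^k$ is a regular embedding (\lemmaref{lemma:regular} applied on the blow-up), and that no component of $\widetilde{R}$ lies in $\widetilde{Z}$. Purity and regularity force $\{c(\eta^*N\otimes\mathcal{O}(-\widetilde{Z}))\cap s(\widetilde{R},\widetilde{\PP}^k)\}_{k-d}=[\widetilde{R}]$, since the positive-degree Chern operations strictly lower dimension; and the fact that no component lies in $\widetilde{Z}$, combined with (\ref{eq:primary}), gives $j_*(\RR)=\eta_*([\widetilde{R}])=[R]$. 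Without this blow-up argument the theorem does not follow from the residual intersection formula.
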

\begin{proof}
The proof is divided in steps 0 through 4.

Step 0: setup. Let $I'$ be the ideal generated by $I(m)$ and let
$\mathfrak{m}=(x_0,\dots,x_k)$. Then
$I:\mathfrak{m}^{\infty}=I':\mathfrak{m}^{\infty}$, and therefore $I$
and $I'$ define the same scheme $Z \subseteq \PP^k$. We may thus
assume that $g_0,\dots,g_r$ all have degree
$m$. Let $$\pi:\widetilde{\PP}^k \rightarrow \PP^k$$ be the blow-up of
$\PP^k$ along $Z$ and put $\widetilde{Z}=\pi^{-1}(Z)$. The map $\PP^k
\setminus Z \rightarrow \PP^r$ defined by $g_0,\dots,g_r$ extends to a
map $$\phi: \widetilde{\PP}^k \rightarrow \PP^r,$$ see
\cite{F}~4.4. In fact, $\widetilde{\PP}^k$ embeds in $\PP^k \times
\PP^r$ in such a way that $(\widetilde{\PP}^k \setminus
\widetilde{Z})$ is the graph of the map $\PP^k \setminus Z \rightarrow
\PP^r$ and $\phi$ is the projection. Let $W \subseteq \PP^k$ be the
scheme defined by $f_1,\dots,f_d$ and put
$\widetilde{W}=\pi^{-1}(W)$. Let $\widetilde{R}$ be the residual to
$\widetilde{Z}$ in $\widetilde{W}$ with respect to
$\widetilde{\PP}^k$.

Step 1: we will show that $\widetilde{R} \rightarrow
\widetilde{\PP}^k$ is a regular embedding and that $\widetilde{R}$ is
either empty or of pure dimension $k-d$ and that no irreducible
component of $\widetilde{R}$ is contained in $\widetilde{Z}$. By
\cite{F}~4.4., $$\phi^*(\mathcal{O}_{\PP^r}(1))=\pi^*(\mathcal{O}_{\PP^k}(m))
\otimes \mathcal{O}_{\widetilde{\PP}^k}(-\widetilde{Z}).$$ In concrete
terms, $\widetilde{\PP}^k$ is defined by a bi-homogeneous ideal $$K
\subseteq \mathbb{C}[x_0,\ldots,x_k][y_0,\ldots, y_r]$$ such that $K$
contains the elements $g_iy_j-g_jy_i$ for $0\leq i< j\leq r$. Observe
that $\widetilde{Z}$ is given by the vanishing of $g_0,\ldots,
g_r$. Consider the affine open set $$U=U_{\alpha\beta} =
\{(x_0,\dots,x_k,y_0,\dots,y_r) \in \widetilde{\PP}^k:x_\alpha \neq 0,
y_\beta \neq 0\}$$ and let $w_0 = \frac{y_0}{y_\beta},\dots,w_r =
\frac{y_r}{y_\beta}$ with $w_\beta = 1$. Then
$w_0,\dots,\widehat{w_{\beta}},\dots,w_r$ are coordinates on $\CC^r =
\{y_\beta \neq 0\} \subset \PP^r$. Note that for all $i$,
$f_i=\sum_{j=0}^r \lambda_i^j g_j$, for a general vector
$(\lambda_i^0,\dots,\lambda_i^r) \in \CC^{r+1}$. With an abuse of
notation, we use $g_i$, $f_i$ and $w_i$ to denote the corresponding
elements of the coordinate ring of $U$. Then, $g_j = w_j g_\beta$ for
all $j$. Hence $\widetilde{Z} \cap U$ is defined by $g_\beta$. Also, $
f_i=(\sum_{j=0}^r \lambda_i^j w_j) g_\beta$. It follows that
$\widetilde{R} \cap U$ is defined by the ideal $(\sum_{j=0}^r
\lambda_1^j w_j, \ldots, \sum_{j=0}^r \lambda_d^j w_j)$. We conclude
that $\widetilde{R}=\phi^{-1}(L)$ for a general linear subspace $L
\subseteq \PP^r$ of codimension $d$ (if $r < d$, then
$\widetilde{R}=\emptyset$). Hence $\widetilde{R}$ is either empty or
of pure dimension $k-d$ and $\widetilde{R} \cap \widetilde{Z}$ is
either empty or of pure dimension $k-d-1$. It follows that no
irreducible component of $\widetilde{R}$ is contained in
$\widetilde{Z}$. Since $\widetilde{\PP}^k$ is a variety (see \cite{H}
Proposition~II.7.16 or \cite{F} Appendix~B.6.4), it follows by
\lemmaref{lemma:regular} that the embedding of $\widetilde{R}$ in
$\widetilde{\PP}^k$ is regular.

Step 2: applying \propref{prop:Fulton}. Let $X_1,\dots,X_d$ be defined
by $X_{\nu}=\{f_{\nu}=0\}$. Then $W=\bigcap_{\nu=1}^d X_{\nu}$. Let
$X=X_1 \times \dots \times X_d$ and $Y=\PP^k \times \dots \times
\PP^k$ ($d$ factors). Let $j:W \rightarrow \PP^k$ be the inclusion and
let $f:\PP^k \rightarrow Y$ and $g:W \rightarrow X$ be the diagonal
morphisms. The morphism $i:X \rightarrow Y$ induced by the inclusions
$X_1,\dots,X_d \subset \PP^k$ is a regular embedding of codimension
$d$. Put $N=g^*(N_XY)$. Letting $V=\PP^k$, we apply
\propref{prop:Fulton} to the diagram
$$\xymatrix{\bigcap_{\nu=1}^d X_{\nu} \ar[r]^j \ar[d]_g & \PP^k \ar[d]^{f} \\ 
X_1 \times \dots \times X_d \ar[r]_i & \PP^k \times \dots \times
  \PP^k}$$ and conclude that 
\begin{equation} \label{eq:Fulton}
X \cdot \PP^k=\{c(N)\cap s(Z,\PP^k)\}_{k-d}+\RR.
\end{equation}
Here $s(Z,\PP^k)$ is regarded as a rational equivalence class on $W$.

Step 3: we will show
that $$m^d=\sum_{i=0}^p\binom{d}{p-i}m^{p-i}\deg{s_i}+\deg{j_*(\RR)},$$
where $p=d-(k-n)$. We shall first see that $N=j^*(E)$ where
$E=\bigoplus_{\nu=1}^d \mathcal{O}_{\PP^k}(m)$.  Let $p_{\nu}:Y
\rightarrow \PP^k$ be the $\nu^{th}$ projection and let
$\mathbb{X}_\nu$ be the divisor $p_{\nu}^{-1}(X_{\nu})$ on $Y$. Then,
by \cite{F} B.7.4,
$$N_XY=\bigoplus_{\nu=1}^d\mathcal{O}_Y({\mathbb{X}_{\nu}})|_X.$$ Since
$f^*(\mathcal{O}_Y(\mathbb{X}_{\nu}))=\mathcal{O}_{\PP^k}(m)$ for all
$\nu$, we have that $$j^*(\mathcal{O}_{\PP^k}(m))=(f\circ
j)^*(\mathcal{O}_Y({\mathbb{X}_{\nu}}))=(i\circ
g)^*(\mathcal{O}_Y({\mathbb{X}_{\nu}}))=g^*(\mathcal{O}_Y({\mathbb{X}_{\nu}})|_X)$$
for all $\nu$. Hence
$$N=g^*(N_XY)=\bigoplus_{\nu=1}^dg^*(\mathcal{O}_Y({\mathbb{X}_{\nu}})|_X)=j^*E.$$
Note that $c(E)=(1+mH)^d \in A_*(\PP^k)$, where $H \in A_{k-1}(\PP^k)$
is the hyperplane class.

We now push both sides of (\ref{eq:Fulton}) forward to $\PP^k$ by $j$
and then take degrees. By B\'ezout's theorem, $\deg{j_*(X \cdot
  \PP^k)}=m^d$ (see \cite{F} Example~6.2.6). By the projection
formula, $j_*(c(N)\cap s(Z,\PP^k))=c(E) \cap j_*(s(Z,\PP^k))$. We get
that $$j_*(\{c(N)\cap s(Z,\PP^k)\}_{k-d})=\{j_*(c(N)\cap
s(Z,\PP^k))\}_{k-d}=$$ $$\{(1+mH)^d \cdot
j_*(s(Z,\PP^k))\}_{k-d}=\{(1+mH)^d\cdot \sum_{i=0}^n
j_*(s_i)\}_{k-d}.$$ The degree of the latter expression
is $$\sum_{i=0}^p\binom{d}{p-i}m^{p-i}\deg{s_i},$$ where $p=d-(k-n)$.

Step 4: it remains to see that $\deg{j_*(\RR)}=\deg{R}$ where $R
\subseteq \PP^k$ is the scheme defined by $J:I^{\infty}$. In fact, we
shall see that $j_*(\RR)=[R]$ in $A_*(\PP^k)$. Let $\eta:\widetilde{W}
\rightarrow W$ be the restriction of $\pi$ to $\widetilde{W}$. Since
$\widetilde{R} \rightarrow \widetilde{\PP}^k$ is a regular embedding we
have that
$s(\widetilde{R},\widetilde{\PP}^k)=c(N_{\widetilde{R}}\widetilde{\PP}^k)^{-1}
\cap [\widetilde{R}]$. Since $\widetilde{R}$ is either empty or has
pure dimension $k-d$, $$\{c(\eta^*N\otimes
\mathcal{O}(-\widetilde{Z}))\cap
s(\widetilde{R},\widetilde{\PP}^k)\}_{k-d}=$$ $$\{c(\eta^*N\otimes
\mathcal{O}(-\widetilde{Z}))\cap
(c(N_{\widetilde{R}}\widetilde{\PP}^k)^{-1} \cap
[\widetilde{R}])\}_{k-d}=[\widetilde{R}].$$ Hence
$\RR=\eta_*([\widetilde{R}])$. Let
$\widetilde{R}_1,\dots,\widetilde{R}_t$ be the irreducible components
of $\widetilde{R}$ and let $m_1,\dots,m_t$ denote their geometric
multiplicities. Since none of the components $\widetilde{R}_1,\dots,\widetilde{R}_t$ is contained in
$\widetilde{Z}$ and $\pi:\widetilde{\PP}^k \rightarrow \PP^k$ is an
isomorphism outside $\widetilde{Z}$, $j_*(\RR)=\sum_{i=1}^t m_i
[\pi(\widetilde{R}_i)]$. Observe that $\pi$ induces an isomorphism 
$(\widetilde{R} \setminus \widetilde{Z}) \cong (W \setminus Z)$. It follows from (\ref{eq:primary}) that $\sum_{i=1}^t m_i [\pi(\widetilde{R}_i)]=[R]$.
\end{proof}

\begin{rem}
With notation as in the proof of \thmref{thm:main}, note that the
group $\textrm{GL}(\CC^{k+1}) \times \textrm{GL}(\CC^{r+1})$ acts
transitively on $\PP^k \times \PP^r$ and that $\widetilde{\PP}^k$ is
regular outside $\widetilde{Z}$. It follows by Kleiman's
transversality theorem \cite{K} that for a general linear subspace $L
\subseteq \PP^r$ of codimension $d$, $\widetilde{R}=\widetilde{\PP}^k
\cap (\PP^{k} \times L)$ is regular outside $\widetilde{Z}$ and the
multiplicities $m_1,\dots,m_t$ of the components of $\widetilde{R}$
are all equal to 1. Moreover, it follows that the scheme $R$ defined
by $J:I^{\infty}$ is regular outside $Z$. This could be of interest in
connection with the computation of the degree of $R$, which is the
main computational ingredient in our method to compute Segre classes.
\end{rem}

\section{The method} \label{sec:method}
\thmref{thm:main} states that certain conditions hold for a
\emph{general} choice of elements of a given ideal. By choosing these
elements randomly we turn this into a probabilistic
algorithm. Applying \thmref{thm:main} to solve for the Segre classes
recursively, we obtain the following procedure to compute the degrees
of the Segre classes of a subscheme of projective space. The input is
an ideal defining the subscheme.

\begin{algorithm}[h] 
\caption{A method to compute the degrees of Segre classes}
\label{alg:segre}
\begin{algorithmic}[1]
\REQUIRE Non-zero homogeneous generators $g_0,\dots,g_r$ of an ideal $I \subseteq \CC[x_0,\dots,x_k]$.  
\ENSURE The degrees of the Segre classes of the subscheme of $\PP^k$ defined by $I$.
\medskip
\STATE Let $m=\max_i \{\deg{g_i}\}$.
\STATE Let $Z \subset \PP^k$ be the scheme defined by $I$ and compute $n=\dim{Z}$.
\STATE Pick random elements $f_1,\dots,f_k \in I(m)$.
\FOR {$d = k-n$ to $k$}
\STATE Let $J=(f_1,\dots,f_d)$.
\STATE Compute $\deg{R}$, where $R \subseteq \PP^k$ is the scheme defined by $J:I^{\infty}$.
\STATE Let $p=d-(k-n)$ and compute $$\deg{s_p}=m^d-\deg{R}-\sum_{i=0}^{p-1}\binom{d}{p-i}m^{p-i}\deg{s_i}.$$
\ENDFOR
\RETURN $\deg{s_0},\dots,\deg{s_n}$
\end{algorithmic}
\end{algorithm}

\begin{rem} \label{rem:improvement}
We will use the notation of \algref{alg:segre} and the proof of
\thmref{thm:main}. In particular $\pi: \widetilde{\PP}^k \rightarrow
\PP^k$ denotes the blow-up of $\PP^k$ along $Z$. Instead of saturating
with respect to the whole ideal $I$, as is done in \algref{alg:segre},
one could saturate with respect to one element of $I$. Let $h \in I$,
$h\neq 0$, let $R'$ be the scheme defined by $J:(h)^{\infty}$ and let
$H \subseteq \PP^k$ be the hypersurface defined by $h$. The claim is
that we could replace $R$ by $R'$ in \algref{alg:segre}. Tracing back
the conditions on $R$ used in the proof of \thmref{thm:main} we see
that we only need to show that no irreducible component of the
residual $\widetilde{R} \subseteq \widetilde{\PP}^k$ is contained in
$\pi^{-1}(H)$. This follows exactly as in step~1 of the proof of
\thmref{thm:main} where it is shown that $\widetilde{R}$ has no
irreducible component inside the exceptional divisor $\widetilde{Z}$.
\end{rem}

\begin{rem} \label{rem:chern-fulton}
Let $Z \subseteq \PP^k$ be a subscheme of dimension $n$ and let $s_i
\in A_{n-i}(Z)$ be the Segre classes of $Z$, that is
$s(Z,\PP^k)=\sum_{i=0}^n s_i$. Define the total Chern-Fulton class of
$Z$ by $$c'(Z)=c(T_{\PP^k}|_Z) \cap s(Z,\PP^k),$$ where $T_{\PP^k}$ is
the tangent bundle of $\PP^k$. This definition is independent of the
embedding of $Z$ in $\PP^k$ in the sense that if $Z$ admits two
embeddings into smooth varieties $M$ and $P$, then $c(T_{M}|_Z) \cap
s(Z,M)=c(T_{P}|_Z) \cap s(Z,P)$, see \cite{F} Example~4.2.6. Let
$c'(Z)=\sum_{i=0}^n c'_i$, with $c'_i \in A_{n-i}(Z)$. If $Z$ is
smooth, then the Chern-Fulton classes coincide with the Chern classes
of the tangent bundle. Since $c(T_{\PP^k})=(1+H)^{k+1}$ where $H \in
A_{k-1}(\PP^k)$ is the hyperplane class, the degrees of the Segre
classes and those of the Chern-Fulton classes are related
by $$\deg{c'_i}=\sum_{p=0}^i \binom{k+1}{i-p}\deg{s_p}.$$ Recall that
in the smooth case, the degree of the top Chern class of the tangent
bundle is equal to the topological Euler characteristic. Thus, in case
$Z$ is smooth, $\deg{c'_n}$ is the topological Euler characteristic of
$Z$ and \algref{alg:segre} provides a way of computing this
topological invariant.
\end{rem}

\section{Examples} \label{sec:examples}
In this section we illustrate \algref{alg:segre} with some
examples. In these examples, for an $n$-dimensional subscheme $Z
\subseteq \PP^k$, we use the notation
$\sigma(Z)=(\deg{s_0},\dots,\deg{s_n})$, where
$s(Z,\PP^k)=\sum_{i=0}^n s_i$ and $s_i \in A_{n-i}(Z)$.

\begin{ex}
Let $I \subset \CC[x,y,z]$ be the ideal $I=(x^2,y^2,xy)$ and let $p
\in \PP^2$ be the degree three zero-scheme defined by $I$. Then $s(p,\PP^2)=s_0 \in
A_0(p)$, and $A_0(p) \cong \ZZ$ via the degree map. Now let $f_1, f_2
\in I$ be general elements of degree 2 and put $J=(f_1,f_2)$. Then
$J:I=(x,y)$ and $(J:I):I=(1)$. Hence $J:I^{\infty}=(1)$ and the
residual $R$ is empty. Therefore $$s_0=2^2 - \deg{R}=4.$$ Note that
$s_0$ is not equal to the degree of $p$ (which is equal to 3).
\end{ex}

\begin{ex}
Consider a plane curve $C \subseteq \PP^2$ defined by one element $g
\in \CC[x,y,z]$ of degree $m$. Then it is immediate from
\algref{alg:segre} that $\sigma(C)=(m,-m^2)$. In case $g=xy$, we get
$\sigma(C)=(2,-4)$.

Now consider the scheme $D \subseteq \PP^2$ defined by
$I=(x^2y,xy^2)$. The support of $D$ is the union of the lines
$L_1=\{x=0\}$ and $L_2=\{y=0\}$, but $D$ has an embedded point at
$p=L_1\cap L_2$. The normal cone $C_D \PP^2$ has three irreducible
components, all of dimension 2, and the supports of these components
are $L_1$, $L_2$ and $p$, respectively. The supports are the so-called
distinguished varieties of the intersection defined by
$\{x^2y,xy^2\}$. A general element $f_1 \in I(3)$ may be written
$f_1=xy(ax+by)$, for general $a,b \in \CC$. Then
$(f_1):I^{\infty}=(f_1):I=(ax+by)$. Hence the residual $R$ is the line
$\{ax+by=0\}$ and $$\deg{s_0}=3-\deg{R}=2.$$ For a general $f_2 \in
I(3)$ we have that $(f_1,f_2)=I$ and it follows
that $$\deg{s_1}=3^2-2\cdot3\deg{s_0}=-3.$$ In summary,
$\sigma(D)=(2,-3)$. Observe that the Segre classes detect the embedded
point $p$.
\end{ex}

\begin{ex}
To illustrate \algref{alg:segre} we show in this example how it works
on a surface $Z \subseteq \PP^k$. Let $Z$ be defined by an ideal $I$
which is generated by polynomials $g_0,\dots,g_r$ in
$\CC[x_0,\dots,x_k]$. Let $m=\max_i\{\deg{g_i}\}$ and let $f_1,\dots,f_k
\in I(m)$ be general. Let $J_2=(f_1,\dots,f_{k-2})$,
$J_1=(f_1,\dots,f_{k-1})$ and $J_0=(f_1,\dots,f_k)$. Let $W_i$ be the
scheme defined by $J_i$ and let $R_i$ be the scheme defined by
$J_i:I^{\infty}$. Then
\begin{align*}
W_2=Z\cup R_2 \quad \dim{R_2}=2 \; \textrm{(or $R_2$ is empty)},\\
W_1=Z\cup R_1 \quad \dim{R_1}=1 \; \textrm{(or $R_1$ is empty)},\\
W_0=Z\cup R_0 \quad \dim{R_0}=0 \; \textrm{(or $R_0$ is empty)}.\\
\end{align*}
The degrees of the Segre classes $s_0,s_1,s_2$ of $Z$ in $\PP^k$ are
computed as follows:
\begin{align*}
&\deg{s_0}=m^{k-2}-\deg{R_2},\\
&\deg{s_1}=m^{k-1}-\deg{R_1}-(k-1)m\deg{s_0},\\
&\deg{s_2}=m^k-\deg{R_0}-\binom{k}{2}m^2\deg{s_0}-km\deg{s_1}.
\end{align*}
\end{ex}

\section{Implementation and benchmarks} \label{sec:benchmarks}
There is an implementation of \algref{alg:segre} in the symbolic setting
using the software system \emph{Macaulay2} \cite{GS}. It uses the
improvement of \remref{rem:improvement}. The user may choose to be
given the degrees of the Chern-Fulton classes as output.
As an alternative to the Gr\"obner basis computations carried out in
\emph{Macaulay2}, one can use the regenerative cascade algorithm
\cite{HSW} implemented in the software package \emph{Bertini}
\cite{BHSW}. The regenerative cascade algorithm uses numerical
homotopy methods to collect data about solution sets of polynomial
equations and this data includes the degrees of the residuals that are
used in \algref{alg:segre} to compute the degrees of the Segre
classes. Both implementations are
available at http://www.math.su.se/$\sim$jost/segreimplementation.htm. 

\tabref{tab:benchmarks} shows run times on some examples, comparing
our implementation \mbox{``segreClass"} to two other algorithms. One is
the April 2009 version of ``CSM" which implements Aluffi's algorithm to
compute Segre classes, see \cite{A}. The other is the routine ``euler"
from \emph{Macaulay2} which computes the topological Euler
characteristic of a smooth projective variety. Observe that the input
to ``euler" is a projective variety, not an ideal. Following Table \ref{tab:benchmarks}, we provide a few details about each example. Additional details on how to
generate the equations for each example may be found at
http://www.math.su.se/$\sim$jost/segreimplementation.htm.

\begin{table}[h] 
\caption{Comparison of run times. On an AMD Athlon 64 Processor, 2.2
  GHz, and with 1 GB RAM. The computations marked with ``-" were
  terminated after 3 hours.}
\label{tab:benchmarks} 
\begin{tabular}{lllllllllll}
\hline 
Input & segreClass & CSM & euler \\
\hline
Rational normal curve in $\PP^6$ & 0.5s & 180s & 4s\\
Rational normal curve in $\PP^{10}$ & - & - & 512s\\
Grassmannian $\mathbb{G}(1,5) \subseteq \PP^{14}$ & - & 2s & -\\
Smooth surface in $\PP^8$ defined by minors & 89s & - & -\\
Abelian surface in $\PP^4$ & 175s & - & -\\
Segre embedding of $\PP^2 \times \PP^3$ in $\PP^{11}$ & - & 8s  & -\\
\hline
\end{tabular}
\end{table}

The defining equations of the rational normal curves in
\tabref{tab:benchmarks} are given as $(2\times2)$-minors of a matrix
with variables as entries. The Grassmann manifold is embedded with the
Pl\"ucker embedding. The surface in $\PP^8$ is defined by the
$(2\times2)$-minors of $(4\times3)$-matrix of random linear forms. The
ideal of the Abelian surface is generated in degrees 5 and 6. The
Grassmannian and the Segre product were run over $\mathbb{Q}$ and the
other examples were run over the finite field with $32749$ elements.

\begin{rem}
In connection with the comparison made in \tabref{tab:benchmarks} it
should be noted that the routine ``euler" computes the topological
Euler characteristic by first computing the Hodge numbers of the
variety and then taking an alternating sum of them. Thus, ``euler"
computes interesting information that is not attainable from the Segre
classes in any obvious way.
\end{rem}

\section{Conclusions}
This paper presents an elementary algorithm, based on residual intersection, to compute the degrees of Segre classes of a subscheme of projective space. 
The symbolic version of the algorithm has been implemented in \emph{Macaulay2} \cite{GS}. The numerical version, using numerical homotopy methods and the regenerative cascade algorithm \cite{HSW}, has been implemented in the software package \emph{Bertini} \cite{BHSW}.  The table of example run times illustrate the complementary nature
of the symbolic implementation of the algorithm to previous symbolic algorithms, in particular to the algorithm of Aluffi \cite{A} in the general case and to the algorithm ``euler" found in \emph{Macaulay2} when run on smooth projective varieties. The numeric implementation shows promise for extending the range of problems to which the algorithm can be applied.

\end{document}